\documentclass{article}

\usepackage{authblk}
\usepackage{amsmath,amsthm,amsfonts,graphicx,amssymb}
\usepackage{hyperref}

\title{Separation profiles of free products}
\author{David Hume\thanks{Email address for correspondence: d.hume@bham.ac.uk}}
\affil{University of Birmingham, UK}
\date{\today}                     

\numberwithin{equation}{section}
\newtheorem{theorem}[equation]{Theorem}
\newtheorem{proposition}[equation]{Proposition}
\newtheorem{corollary}[equation]{Corollary}
\newtheorem{lemma}[equation]{Lemma}

\theoremstyle{definition}

\newtheorem{definition}[equation]{Definition}

\newtheorem*{corollary*}{Corollary}
\newtheorem*{theorem*}{Theorem}
\newtheorem*{proposition*}{Proposition}

\newtheoremstyle{citing}
  {3pt}
  {3pt}
  {\itshape}
  {}
  {\bfseries}
  {}
  {.5em}
  {\thmnote{#3}}

\theoremstyle{citing}

\DeclareMathOperator{\cw}{cw}

\DeclareMathOperator{\pw}{pw}
\DeclareMathOperator{\sn}{sn}
\DeclareMathOperator{\tw}{tw}

\DeclareMathOperator{\sumcut}{sc}

\DeclareMathOperator{\cut}{cut}
\DeclareMathOperator{\sep}{sep}

\newcommand{\setcon}[2]{\left\{#1\ \left|\ #2\right.\right\}}

\newcommand{\Cay}{\mathrm{Cay}}

\newcommand{\N}{\mathbb{N}}

\def\XXint#1#2#3{{\setbox0=\hbox{$#1{#2#3}{\int}$}
\vcenter{\hbox{$#2#3$}}\kern-.5\wd0}}

\numberwithin{equation}{section}

\begin{document}
\maketitle

\begin{abstract}
We deduce from a theorem of Dvorak--Norin that the separation and treewidth profiles of graphs are asymptotically equivalent, resolving a question of Huang--Hume--Kelly--Lam. As an application, we calculate the separation profiles of Cayley graphs of tree-graded graphs in terms of their pieces. Examples of tree-graded graphs include Cayley graphs of free products of finitely generated groups.
\end{abstract}

\section{Introduction}
The separation profile, introduced by Benjamini--Schramm--Tim\'ar \cite{BenSchTim-12-separation-graphs}, assigns a sublinear function $\sep_X:\N\to\N$ to each graph $X=(VX,EX)$ that measures how robustly connected finite subgraphs of $X$ are. For a full definition, see $\S\ref{defn:sep}$. It satisfies a remarkable monotonicity property: given two bounded degree graphs $X$ and $Y$, whenever there is a map $\phi:VX\to VY$ and a constant $\kappa$ such that 
\begin{equation}\label{eq:kappareg}
\max_{xx'\in EX} d_Y(\phi(x),\phi(x'))\leq \kappa, \quad \textup{and} \quad \max_{y\in VY}|\phi^{-1}(y)|\leq\kappa    
\end{equation}
there is a constant $C$ such that
\begin{equation}\label{eq:simeq}
    \sep_X(r) \leq C\sep_Y(Cr),\quad \forall r\in \N.
\end{equation}
A map $\phi:VX\to VY$ is said to be \textbf{regular} if it satisfies \eqref{eq:kappareg} for some $\kappa$. Examples of regular maps include: quasi-isometries, coarse embeddings and subgraph inclusions between bounded degree graphs. In particular, if $X$ and $Y$ are Cayley graphs of finitely generated groups $H$ and $G$ (with respect to any choice of finite generating sets), then there is a regular map $VX\to VY$ whenever $H$ is virtually (or even more generally, quasi-isometric to) a subgroup of $G$.

Given functions $f,g:\N\to\N$ we write $f\lesssim g$ if there is a constant $C$ such that $f(r)\leq Cg(Cr)+C$ for all $r\in\N$ and we write $f\simeq g$ if $f\lesssim g$ and $g\lesssim f$ (so \eqref{eq:simeq} states that $\sep_X\lesssim \sep_Y$). We consider separation profiles as $\simeq$-equivalence classes of functions, meaning that a finitely generated group has a well-defined separation profile - that of any of its Cayley graphs.

The separation profile is a powerful invariant in coarse geometry and its applications in group theory. Among finitely generated polycyclic groups it recognises those with polynomial growth of degree $d$ (that satisfy $\sep_G(r)\simeq r^{1-\frac1d}$) from those of exponential growth (that satisfy $\sep_G(r)\simeq r/\log(r)$), and among Riemannian symmetric spaces, it recognises those whose noncompact factor has rank at most 1 from those with higher rank \cite{HumeMackTess-Pprof,HumeMackTess-PprofLie}. It is also connected with many other interesting invariants such as isoperimetry, growth and conformal dimension for compact metric spaces \cite{LeCozGournay,GladShum,HumeMack_confdim}. A long-standing and slightly irritating gap in our understanding of separation profiles of groups concerns its behaviour under free products. We resolve this here.

\begin{theorem}\label{thm:sepfreeprod}
    Let $G$ and $H$ be finitely generated groups. Then
    \[
    \sep_{G*H}(r)\simeq \max\{\sep_G(r),\sep_H(r)\}
    \]
\end{theorem}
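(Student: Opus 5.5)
The lower bound $\max\{\sep_G,\sep_H\}\lesssim \sep_{G*H}$ is immediate from monotonicity: $G$ and $H$ are subgroups of $G*H$, so the subgroup inclusions give regular maps between Cayley graphs, and \eqref{eq:simeq} applies. All the work is in the upper bound. Following the abstract, I will not bound separation directly. Instead I will pass to treewidth, using the asymptotic equivalence $\sep_X\simeq \tw_X$ that the paper deduces from Dvořák--Norin, where $\tw_X(r)$ is the maximal treewidth of a subgraph of $X$ with at most $r$ vertices. This helps because treewidth behaves well under gluing along cut vertices, and separation does not obviously do so.

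Fix finite generating sets $S_G,S_H$ and let $X=\Cay(G*H,S_G\cup S_H)$. This graph is tree-graded. Every cycle lies in a single coset $gG$ or $gH$, each such coset induces a copy of $\Cay(G,S_G)$ or $\Cay(H,S_H)$, and distinct pieces meet in at most one vertex, which is a cut vertex. So the blocks of $X$ refine the pieces, and the block--cut tree of $X$ is obtained from the Bass--Serre tree.

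Now take a finite subgraph $\Gamma\subseteq X$ with $|V\Gamma|\le r$. Write $\Gamma_P=\Gamma\cap P$ for each piece $P$. Each $\Gamma_P$ is a subgraph of a copy of $\Cay(G)$ or $\Cay(H)$ with at most $r$ vertices. Hence $\tw(\Gamma_P)\le \max\{\tw_G(r),\tw_H(r)\}$.

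The key combinatorial fact is that treewidth is a maximum over gluings at single vertices. If $\Gamma=\Gamma_1\cup\Gamma_2$ with $V\Gamma_1\cap V\Gamma_2=\{v\}$, then $\tw(\Gamma)=\max\{\tw\Gamma_1,\tw\Gamma_2\}$. To see this, take tree decompositions of $\Gamma_1$ and $\Gamma_2$, pick a bag containing $v$ in each, and join those two bags by an edge. Every edge of $\Gamma$ lies in some $\Gamma_P$, so $\Gamma$ is assembled from the $\Gamma_P$ by repeated single-vertex gluings, one for each edge of the finite subtree of the Bass--Serre tree that they span.

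There is one subtlety. $\Gamma_P$ may be disconnected, and the gluing pattern must still be tree-like. I will handle this by inducting on the number of pieces meeting $\Gamma$ in an edge. Choose a leaf piece $P$ in the spanned subtree. Then $\Gamma_P$ meets the rest of $\Gamma$ in at most one vertex, because $P$ is attached to the rest of $X$ through a single vertex on the tree side. So $\Gamma$ is a one-vertex (or disjoint) union of $\Gamma_P$ and a graph involving fewer pieces.

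This yields $\tw_{X}(r)\le \max\{\tw_G(r),\tw_H(r)\}$. Converting back with $\sep\simeq\tw$ gives
\[
\sep_{G*H}(r)\lesssim\max\{\sep_G(r),\sep_H(r)\}.
\]
The $\simeq$ constants are harmless: the maximum of two functions respects $\lesssim$ when the same constant $C$ is taken for both.

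The main obstacle is the equivalence $\sep\simeq\tw$ itself. The direction $\sep\lesssim\tw$ is classical, since a graph of treewidth $k$ has a balanced separator of size $k+1$, and this must be applied to every subgraph. The reverse direction is exactly where Dvořák--Norin enters. Their theorem says that if every subgraph of $\Gamma$ has a balanced separator of size at most $s$, then $\tw\Gamma=O(s)$. The care needed is to check that a bound on $\sep_X(r)$ controls separators of all subgraphs of an $r$-vertex $\Gamma$, which it does because $\sep$ is monotone in $r$. With that granted, the rest of the argument above is routine.
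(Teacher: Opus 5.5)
Your proposal is correct and follows the paper's route: Dvorak--Norin to pass to the treewidth profile, the tree-graded structure of $\Cay(G*H,S_G\cup S_H)$, and the one-vertex gluing lemma for treewidth. The paper also spells out the routine regrouping of the components of $\Gamma-C$ that turns a $\frac12$-cutset into a $\frac23$-balanced separator, a step you leave implicit.

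The only real difference is how the gluings are ordered. You peel off a leaf $P$ of the convex hull, in the Bass--Serre tree, of the pieces meeting $\Gamma$ in an edge. It is this hull, not all of $X$, that $P$ meets in at most one vertex. Because the gluing lemma needs no connectivity, this handles disconnected $\Gamma_P$. The paper instead treats an arbitrary tree-graded graph: it uses a simple-cycle argument to show each $\Gamma\cap X_i$ is connected when $\Gamma$ is, then orders the pieces so that each meets the union of its predecessors in exactly one vertex.
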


Using \cite{PW02} and standard arguments we deduce the following.

\begin{corollary}\label{cor:graphofgroups}
    Let $G$ be the fundamental group of a finite graph of groups with finitely generated vertex groups $\{G_v\}$ and finite edge groups. Then
    \[
    \sep_{G}(r)\simeq \max\{\sep_{G_v}(r)\}
    \]
\end{corollary}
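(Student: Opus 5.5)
We derive Corollary \ref{cor:graphofgroups} from Theorem \ref{thm:sepfreeprod} by reducing to free products, using the quasi-isometry classification of Papasoglu--Whyte \cite{PW02}. Write $\mathcal{G}$ for the finite graph of groups, with vertex groups $G_v$ and finite edge groups.

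\emph{Lower bound.} Each vertex group $G_v$ is a finitely generated subgroup of $G$. Since $G$ is finitely generated (finite graph, finitely generated vertex groups), the inclusion $G_v\hookrightarrow G$ is Lipschitz between word metrics and injective. It is therefore a regular map, so monotonicity gives $\sep_{G_v}\lesssim \sep_G$ for every $v$. Taking the maximum over the finitely many vertices gives $\max_v\sep_{G_v}\lesssim\sep_G$.

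\emph{Upper bound.} We split into cases according to the number of ends.
\begin{itemize}
\item If every $G_v$ is finite, then $G$ is virtually free. Hence $\sep_G$ is bounded, or $\simeq$ that of $\mathbb{Z}$ or $F_2$, which is bounded since trees have bounded separation. So $\sep_G\simeq\max_v\sep_{G_v}$ trivially in the sense of $\simeq$, where bounded functions are all equivalent since $f\lesssim g$ allows an additive constant.
\item If $G$ has at most one end, then the splitting is trivial up to finite index: some $G_v$ has finite index in $G$. Then $G$ and $G_v$ are quasi-isometric, and we are done.
\item Otherwise $G$ is infinitely-ended and quasi-isometric to the free product $*_v G_v$, possibly together with copies of $\mathbb{Z}$. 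Here I recall the result of \cite{PW02}: two infinitely-ended groups that split over finite groups are quasi-isometric when their sets of one-ended vertex groups agree up to quasi-isometry. Strictly one must pass to the one-ended factors, using Dunwoody accessibility if needed. The cleanest route is to note that $G$ is quasi-isometric to $G_{v_1}*\cdots*G_{v_n}*\mathbb{Z}$ whenever this free product is infinitely ended, with a harmless adjustment if all but one factor is finite.
\end{itemize}

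In the infinitely-ended case we apply Theorem \ref{thm:sepfreeprod} inductively, $n$ times. This gives $\sep_G\simeq\max\{\sep_{G_{v_1}},\dots,\sep_{G_{v_n}},\sep_{\mathbb{Z}}\}$. Since $\sep_\mathbb{Z}$ is bounded, it is absorbed into the maximum. Combining this with the lower bound gives $\sep_G\simeq\max_v\sep_{G_v}$, proving Corollary \ref{cor:graphofgroups}.

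The main obstacle is the precise invocation of \cite{PW02}. Their theorem is phrased in terms of one-ended vertex groups after a maximal splitting, whereas here the vertex groups may themselves split further over finite groups. I would handle this by refining the graph of groups: split each $G_v$ maximally over finite groups, which is possible when $G_v$ is finitely presented, or else argue directly. An alternative that avoids accessibility is to build a bi-Lipschitz-type regular map from the Bass--Serre tree-of-spaces model of $G$ into a Cayley graph of $G_{v_1}*\cdots*G_{v_n}*\mathbb{Z}$, and vice versa. Such maps exist because both spaces are trees of copies of the $G_v$ glued along bounded sets, and regularity only needs bounded fibres and Lipschitz edges. Since only regular maps are required, and not quasi-isometries, this direct construction is probably the more robust route.
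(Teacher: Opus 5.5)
Your argument is the paper's: it also gets the upper bound by citing \cite{PW02} for a quasi-isometry from $G$ to a free product of some of the $G_v$ with a finite-rank free group, then applies Theorem~\ref{thm:sepfreeprod} and the boundedness of $\sep$ for free groups, while the lower bound comes from the inclusions $G_v\leq G$, which the paper leaves implicit as a ``standard argument''. Two minor points: your case split misses two-ended $G$ with an infinite vertex group (harmless, since $\sep_G$ is then bounded), and accessibility can fail for finitely generated $G_v$, so your direct construction is the right fallback if you want to avoid the one-endedness hypothesis in \cite{PW02}\textemdash{}and it can go straight into a tree-graded graph whose pieces are copies of Cayley graphs of the $G_v$ joined by single bridging edges, so Proposition~\ref{prop:twtg} applies without passing through a free product at all.
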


The proof of Theorem \ref{thm:sepfreeprod} has two steps. Firstly, we show how a theorem of Dvorak--Norin implies that the separation profile of any graph can be asymptotically determined by the treewidth (denoted $\tw$, cf.\ Definition \ref{defn:tw}) of its finite subgraphs.

\begin{proposition}\label{prop:twsepequiv}
    Let $X=(VX,EX)$ be a graph. Then for every $r$,
    \[
    \sep_X(r)-1 \leq \max\setcon{\tw(\Gamma)}{\Gamma\leq X,\ |V\Gamma|\leq r} \leq 15\sep_X(r)
    \]
\end{proposition}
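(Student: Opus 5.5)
The plan is to prove the two inequalities separately. The left one follows from the classical fact that bounded treewidth gives small balanced separators. The right one is a direct application of the Dvorak--Norin theorem, which says roughly the converse. Throughout I use the definitions of $\S\ref{defn:sep}$ and Definition \ref{defn:tw}: $\sep_X(r)$ is the maximum of $\cut(\Gamma)$ over subgraphs $\Gamma\leq X$ with $|V\Gamma|\leq r$. Here $\cut(\Gamma)$ is the least size of a set $S\subseteq V\Gamma$ such that every component of $\Gamma-S$ has at most $|V\Gamma|/2$ vertices.

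\emph{Left inequality.} Fix $\Gamma\leq X$ with $|V\Gamma|\leq r$ and a tree decomposition $(T,\{B_t\})$ of $\Gamma$ of width $k=\tw(\Gamma)$. I will use the standard (Robertson--Seymour) argument to find a single bag $B_t$ whose removal leaves components with at most $|V\Gamma|/2$ vertices.
\begin{itemize}
\item Orient each edge $tt'$ of $T$ towards the side that contains more than half of the vertices of $\Gamma\setminus(B_t\cap B_{t'})$, if such a side exists.
\item Each component of $\Gamma-B_t$ lives in the subtree on one side of some edge at $t$.
\item Since $T$ is finite, following the orientation ends at a node $t$ with no outgoing edge.
\item At such a node every component of $\Gamma-B_t$ has at most $|V\Gamma|/2$ vertices.
\end{itemize}
This gives $\cut(\Gamma)\leq k+1$. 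Maximising over $\Gamma$ yields $\sep_X(r)\leq \max\{\tw(\Gamma)\}+1$.

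\emph{Right inequality.} Again fix $\Gamma\leq X$ with $|V\Gamma|\leq r$. Every subgraph $\Gamma'\leq\Gamma$ is a subgraph of $X$ with at most $r$ vertices, so $\cut(\Gamma')\leq\sep_X(r)$. Thus every subgraph of $\Gamma$ has a balanced separator of size at most $s=\sep_X(r)$. The Dvorak--Norin theorem states that if every subgraph of a graph $G$ has a balanced separator of size at most $s$, then $\tw(G)\leq 15s$. Applying it to $\Gamma$ gives $\tw(\Gamma)\leq 15\sep_X(r)$, and maximising over $\Gamma$ completes the proof.

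\emph{Main obstacle.} The only delicate point is checking that the notion of balanced separator in Dvorak--Norin is implied by ours. Their separators may be defined with a threshold of $2/3$ rather than $1/2$, or as a separation $(A,B)$ with $|A\setminus B|,|B\setminus A|\leq \frac23|V|$. A set $S$ with all components of $\Gamma'-S$ of size at most $|V\Gamma'|/2$ satisfies the $2/3$ condition: greedily group the components into two sides, each of size at most $\frac23|V\Gamma'|$. So the hypothesis is met with the same $s$. I would also check that $X$ need not be finite or of bounded degree, since everything happens inside finite subgraphs.
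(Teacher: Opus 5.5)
Your proof is correct and follows the same route as the paper. For the right inequality, the paper likewise turns a $\frac12$-cutset $C$ of each subgraph into a balanced separation $(A'\cup C, B'\cup C)$ and applies Dvorak--Norin's bound $\tw(\Gamma)\leq 15\sn(\Gamma)$; it makes your ``greedy'' grouping precise by ordering components by decreasing size and taking the longest prefix of total size at most $\frac23|V\Gamma|$, which then has size at least $\frac13|V\Gamma|$. For the left inequality, the paper just cites Lemma 2.3 of Benjamini--Schramm--Tim\'ar, while you reprove that standard fact by finding a sink of the orientation on $T$, which is fine since a tree with each edge oriented at most one way has no directed cycles.
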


Treewidth was introduced by Robertson--Seymour as a measure of how tree-like a graph is \cite{RobSey_tw}. It is an important invariant in its own right and has many applications, see \cite{Bodlaender_treewidth} and references therein.

The function $\tw_X(r)=\max\setcon{\tw(\Gamma)}{\Gamma\leq X,\ |V\Gamma|\leq r}$ is called the treewidth profile in \cite{HHKL26}. Proposition \ref{prop:twsepequiv} resolves Question 4.1 from that paper.

Secondly, we show that the treewidth of a tree-graded graph (a graph made of ``pieces'' arranged in a tree-like manner, cf.\ Definition \ref{defn:tg}) depends only on the treewidth of the pieces.

\begin{proposition}\label{prop:twtg} Let $X$ be a tree-graded graph with pieces $(X_i)_{i\in I}$. Then
\[
    \tw_X(r) = \max\setcon{\tw_{X_i}(r)}{i\in I}.
\]
\end{proposition}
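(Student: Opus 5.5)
The inequality $\tw_X(r)\geq \max_i \tw_{X_i}(r)$ is immediate. Each piece $X_i$ is a subgraph of $X$, so every subgraph $\Gamma\leq X_i$ with $|V\Gamma|\leq r$ is also a subgraph of $X$. Hence the maximum over subgraphs of $X$ dominates the maximum over subgraphs of each piece.

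For the reverse inequality, fix a finite subgraph $\Gamma\leq X$ with $|V\Gamma|\leq r$. We aim to show $\tw(\Gamma)\leq \max_i \tw(\Gamma\cap X_i)$. Since $|V(\Gamma\cap X_i)|\leq r$, this gives $\tw(\Gamma)\leq\max_i\tw_{X_i}(r)$.

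The structural input we take from Definition \ref{defn:tg} is this. In a tree-graded graph, distinct pieces meet in at most one vertex, every edge lies in some piece, and every simple cycle lies in a single piece. Consequently the pieces containing an edge of $\Gamma$ are finite in number, and the subgraphs $\Gamma_i=\Gamma\cap X_i$ are glued along cut vertices. More precisely, we form the bipartite incidence graph $B$ whose vertices are the pieces $i$ with $E\Gamma_i\neq\emptyset$ together with the vertices of $\Gamma$ lying in two or more such pieces, with an edge whenever the vertex lies in the piece. Then $B$ is a forest. If it contained a cycle, concatenating paths inside the pieces would produce a simple cycle of $X$ not contained in a single piece, which is impossible; equivalently, this follows from the tree-like arrangement of pieces built into the definition. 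Isolated vertices of $\Gamma$ can be ignored, since they do not affect treewidth beyond the trivial value $0$.

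Next we glue tree decompositions. Take an optimal tree decomposition $(T_i,\{B^i_t\})$ of each $\Gamma_i$. For each vertex $v$ of the forest $B$ that lies in several pieces, choose in each $T_i$ with $v\in\Gamma_i$ a node whose bag contains $v$. Join these chosen nodes to a new node carrying the bag $\{v\}$, or simply join them to one another along a tree. Doing this along the forest $B$ produces a tree (or forest, which can then be connected arbitrarily) whose bags cover every edge, since every edge lies in some $\Gamma_i$. We must check that for each vertex $u$ the set of nodes containing $u$ is connected. If $u$ lies in only one piece, this is inherited from $T_i$. If $u$ is a gluing vertex, its node sets in the various $T_i$ are joined through the new node $\{v\}$. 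Since $B$ is a forest, no other vertex is shared across pieces in a way that creates extra occurrences. The width of the glued decomposition is the maximum of the widths of the pieces and $0$, which gives $\tw(\Gamma)=\max_i\tw(\Gamma_i)$.

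The main obstacle is justifying, from the precise definition of tree-graded graph given in the paper, that the incidence structure $B$ restricted to a finite subgraph is acyclic. This is where the statement that simple cycles (or geodesic triangles) lie in single pieces must be used carefully. A subtlety is that $\Gamma_i$ may be disconnected, so the argument has to pass through cycles in $X$ rather than in $\Gamma$. Everything else is the standard clique-sum (here $1$-sum) lemma for treewidth.
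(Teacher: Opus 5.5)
Your strategy is viable and differs from the paper's. The paper reduces to connected $\Gamma$ and proves each $\Gamma_i$ is connected. It then orders the pieces so that each meets the union of its predecessors in exactly one vertex, and applies the two-piece gluing lemma repeatedly. You instead glue all the optimal decompositions at once along the incidence graph $B$, which lets you skip both the reduction to connected $\Gamma$ and the connectivity of the $\Gamma_i$. Your gluing step is correct. Contracting each $T_i$ to a point turns the glued graph into $B$, so it is a forest exactly when $B$ is one. The occurrence sets are connected for the reasons you give; acyclicity of $B$ is needed for the forest property, not for the occurrence sets.

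The gap is the acyclicity of $B$ itself. This is the only step that uses the tree-graded hypothesis, and you assert it rather than prove it. The justification you sketch fails as stated. Take a cycle $i_1v_1i_2v_2\cdots i_kv_ki_1$ in $B$ and paths $P_j\subseteq X_{i_j}$ from $v_{j-1}$ to $v_j$ (indices mod $k$). The concatenation $P_1\cdots P_k$ need not be simple, because $P_j$ and $P_{j'}$ can meet at a vertex of $X_{i_j}\cap X_{i_{j'}}$ lying outside $\Gamma$. Such a vertex is not a vertex of $B$, so you cannot pass to a shorter cycle of $B$. And after extracting some simple cycle, you would still have to show it is not contained in one piece.

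Here is one way to close the gap.
\begin{enumerate}
\item Take each $P_j$ simple; it has at least one edge since $v_{j-1}\neq v_j$.
\item Distinct pieces share at most one vertex, hence no edge. So the $P_j$ are pairwise edge-disjoint and $P_1\cdots P_k$ is a closed trail.
\item The edge set of this trail is a nonempty subgraph in which every vertex has even degree, so it contains a simple cycle $C$.
\item By Definition~\ref{defn:tg}, $C\subseteq X_i$ for some $i$. Every edge of $C$ lies in some $P_j$, hence in $X_{i_j}\cap X_i$, which forces $i_j=i$.
\item Since the $i_j$ are distinct, $C$ lies inside a single simple path $P_j$, which is absurd.
\end{enumerate}
With this filled in, your argument is complete.
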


The proof of Theorem \ref{thm:sepfreeprod} then proceeds as follows. Let $S,T$ be finite generating sets of $G$ and $H$ respectively. Then $S\sqcup T$ generates $G*H$ and the Cayley graph $X=\Cay(G*H,S\sqcup T)$ is tree-graded with pieces $(X_i)_{i\in I}$ where each $X_i$ isomorphic to one of $Y=\Cay(G,S)$ or $Z=\Cay(H,T)$. Now we apply Propositions \ref{prop:twsepequiv} and \ref{prop:twtg} to deduce that
\begin{align*}
 \sep_X(r) & \leq \tw_X(r)+1 \\ &\leq \max\{\tw_Y(r),\tw_Z(r)\}+1 \\ & \leq 15\max\{\sep_Y(r),\sep_Z(r)\}+1
\end{align*}
As the separation profile of a finitely generated group does not depend (up to $\simeq$) on the choice of generating set, we are done.

Corollary \ref{cor:graphofgroups} follows, as \cite{PW02} implies that any such $G$ is quasi-isometric to a free product of some of the vertex groups with a (possibly trivial) finite rank free group, and the separation profile of a free group is bounded.

We briefly mention immediate consequences for other invariants introduced in \cite{HHKL26}. In each case, they are defined by replacing treewidth in the definition of the treewidth profile by the mentioned graph layout parameter.

\begin{corollary}\label{cor:graphofgroups2}
    Let $G$ be the fundamental group of a finite graph of groups with finitely generated vertex groups $\{G_v\}$ and finite edge groups. If $G$ is not virtually cyclic, then the \textbf{cutwidth profile} of $G$ is
    \[
        \cw_{G}(r)\simeq \max\{\sep_{G_v}(r),\log(1+r)\}
    \]
    the \textbf{pathwidth profile} of $G$ is
    \[
        \pw_{G}(r)\simeq \max\{\sep_{G_v}(r),\log(1+r)\}
    \]
    and the \textbf{sumcut profile} of $G$ is
    \[
        \sumcut_{G}(r)\simeq r\max\{\sep_{G_v}(r),\log(1+r)\}
    \]
\end{corollary}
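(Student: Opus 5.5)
The plan is to reduce the statement to Corollary \ref{cor:graphofgroups} together with the comparison results of \cite{HHKL26}, which relate the cutwidth, pathwidth and sumcut profiles of a finitely generated group to its separation (equivalently, by Proposition \ref{prop:twsepequiv}, treewidth) profile. I am assuming that \cite{HHKL26} contains the following theorem, and the argument stands or falls with it: for a finitely generated group $K$ that is not virtually cyclic,
\[
\cw_K(r)\simeq \pw_K(r)\simeq \max\{\sep_K(r),\log(1+r)\},\qquad \sumcut_K(r)\simeq r\max\{\sep_K(r),\log(1+r)\}.
\]

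\textbf{Step 1: transfer $\simeq$ along quasi-isometries.} The cutwidth, pathwidth and sumcut profiles are defined by replacing $\tw$ with another layout parameter in the definition of $\tw_X$. Each of these parameters is monotone under subgraphs and changes by at most a bounded factor under regular maps between bounded degree graphs. So, exactly as for $\sep$, each profile is a well-defined $\simeq$-invariant of a finitely generated group and is monotone under regular maps. This lets us replace $G$ by any group quasi-isometric to it.

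\textbf{Step 2: compute the separation profile.} By Corollary \ref{cor:graphofgroups}, $\sep_G\simeq\max_v\{\sep_{G_v}\}$.

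\textbf{Step 3: assemble.} Substituting Step 2 into the displayed formulas for $K=G$ gives all three statements at once. Here we use that $\max$ and multiplication by $r$ both respect $\simeq$ for nondecreasing functions.

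\textbf{Main obstacle.} The key step is verifying that the formulas from \cite{HHKL26} genuinely apply, or proving them directly if they are stated only under extra hypotheses.

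\emph{Lower bound.} Since $\cw\geq\pw\geq\tw$ and $\sumcut\gtrsim r\cdot\cw$, the separation term follows from Proposition \ref{prop:twsepequiv}. For the $\log(1+r)$ term, I would argue as follows. If $G$ has infinitely many ends, then it contains a quasi-isometrically embedded $F_2$. Hence there are regular maps from finite complete binary trees of size about $r$, whose pathwidth is about $\log r$. If instead $G$ is one-ended, I would use a regular embedding of a ray or a bi-infinite line, combined with the known lower bound $\pw\gtrsim\log$ for Cayley graphs of infinite groups that are not virtually cyclic. I would first look this up in \cite{HHKL26} rather than reprove it.

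\emph{Upper bound.} I would use recursive separator layouts. Split a subgraph $\Gamma$ with $|V\Gamma|\leq r$ by a balanced separator of size at most $\sep_G(r)$. Lay out the pieces recursively and concatenate. This gives $\cw(\Gamma)\lesssim\sum_k \sep_G(c^k r)$ for some $c<1$. The delicate point is to show that this sum is $\lesssim\max\{\sep_G(r),\log r\}$. For free products I would exploit the tree-graded structure, as in the proof of Proposition \ref{prop:twtg}. Cut at cut-vertices of the tree of pieces, which costs only $O(1)$ per level and $O(\log r)$ over a balanced tree decomposition. Inside each piece, use the layout coming from the vertex group. This should give the maximum of the two terms rather than their product.
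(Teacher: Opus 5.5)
Your main line is the paper's argument. The paper gives no separate proof of this corollary: it presents it as an immediate consequence of the results of \cite{HHKL26} on these layout profiles, combined with Proposition \ref{prop:twsepequiv} (to pass between $\tw$ and $\sep$) and Corollary \ref{cor:graphofgroups}, which is exactly your Steps 1--3.

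Your direct-proof fallback is unnecessary, and it would not close on its own. A ray or line has pathwidth $1$, so the $\log$ lower bound comes entirely from the cited result. The tree-graded layout only reduces the upper bound to the same estimate $\cw_{G_v}\lesssim\max\{\sep_{G_v},\log(1+r)\}$ for each vertex group. Note that the one-vertex case of the corollary is precisely that general statement for an arbitrary non-virtually-cyclic group.
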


\subsection*{Acknowledgements}
The author wishes to thank Joshua Erde for drawing his attention to the work of Dvorak--Norin on which this note relies.

\section{Treewidth, separation and balanced separators}

Here we collect the necessary definitions for non-specialists and deduce Proposition \ref{prop:twsepequiv} from \cite[Theorem 1]{Dvorak-Norin}.

We start with the separation and treewidth profiles.

\begin{definition} Let $\Gamma=(V\Gamma,E\Gamma)$ be a finite graph. The \textbf{cutsize} of $\Gamma$ is the minimum cardinality of a subset $S\subseteq V\Gamma$ with the property that every connected component of $\Gamma-S$ contains at most $\frac12|V\Gamma|$ vertices.    
\end{definition}

\begin{definition}[\cite{BenSchTim-12-separation-graphs}]\label{defn:sep}
    Let $X=(VX,EX)$ be any graph and let $\varepsilon\in (0,1)$. The \textbf{separation profile} of $X$ is the function $\sep_X:\N\to\N$ given by
    \[
        \sep_X(r) = \max\setcon{\cut(\Gamma)}{\Gamma\leq X,\ |V\Gamma|\leq r}
    \]
\end{definition}

\begin{definition}[\cite{RobSey_tw}] \label{defn:tw}
A \textbf{tree-decomposition} of a graph $\Gamma=(V\Gamma,E\Gamma)$ is a pair $(T,\{X_g\}_{g\in VT})$, where $T=(VT,ET)$ is a tree, each $X_g\subseteq V\Gamma$, and
\begin{enumerate}
    \item $\bigcup_{g\in VT} X_g=V\Gamma$,
    \item for every edge $vw\in E\Gamma$ there is some $g\in VT$ such that $v,w\in X_g$,
    \item for each $v\in V\Gamma$, the full subgraph of $T$ with vertex set $\setcon{g\in VT}{v\in X_g}$ is connected.
\end{enumerate}
The width of a tree-decomposition $(T,\{X_g\}_{g\in VT})$ is $\max_{g\in VT} |X_g|-1$, and the \textbf{treewidth} of $\Gamma$, $\tw(\Gamma)$ is the minimum width of a tree-decomposition of $\Gamma$.

The \textbf{treewidth profile} of a graph $X$ is the function $\tw_X:\N\to\N$ given by
\[
    \tw_X(r) = \max\setcon{\tw(\Gamma)}{\Gamma\leq X,\ |V\Gamma|\leq r}
\]
\end{definition}

Dvorak--Norin find an upper bound for the treewidth of any graph in terms of the size of the balanced separators of its subgraphs.

\begin{definition}\label{defn:balancedsep}
    A \textbf{balanced separator} of a finite graph $\Gamma$ is a pair of subsets $A,B\subseteq V\Gamma$ satsifying $|A\setminus B|,|B\setminus A|\leq \frac23|V\Gamma|$ such that there is no edge connecting $A\setminus B$ to $B\setminus A$. The \textbf{size} of a balanced separator is $|A\cap B|$. The \textbf{separation number} of $\Gamma$, $\sn(\Gamma)$ is the smallest $s$ such that every subgraph of $\Gamma$ admits a balanced separator of size at most $s$.
\end{definition}

With all these definitions in place, we may prove Proposition \ref{prop:twsepequiv}.
    
\begin{proposition} Let $X=(VX,EX)$ be a graph. For every $r\in\N$,
\[
    \sep_X(r)-1\leq \tw_X(r) \leq 15\sep_X(r)
\]
\end{proposition}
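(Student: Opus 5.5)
For the lower bound $\sep_X(r)-1\le\tw_X(r)$, it suffices to show $\cut(\Gamma)\le\tw(\Gamma)+1$ for every finite subgraph $\Gamma$ with $|V\Gamma|\le r$. This is the classical fact that a graph with a tree-decomposition of width $k$ has a bag whose removal leaves components of size at most $\frac12|V\Gamma|$. Take an optimal tree-decomposition $(T,\{X_g\})$ and orient each edge $gh$ of $T$ towards the side containing more than $\frac12|V\Gamma|$ vertices of $V\Gamma\setminus(X_g\cap X_h)$, if such a side exists. Here a vertex belongs to the $h$-side if it lies only in bags of the component of $T-gh$ containing $h$.

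At most one side can be heavy, and the orientations are consistent, so following them we reach a sink $g$. Every component of $\Gamma-X_g$ lies in bags of a single component of $T-g$. By the sink property, each such component has at most $\frac12|V\Gamma|$ vertices. Hence $\cut(\Gamma)\le|X_g|\le\tw(\Gamma)+1$, and taking maxima gives the first inequality.

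For the upper bound, invoke \cite[Theorem 1]{Dvorak-Norin}, which states that $\tw(\Gamma)\le 15\,\sn(\Gamma)$ for every finite graph $\Gamma$. It remains to show that $\sn(\Gamma)\le\sep_X(r)$ whenever $\Gamma\le X$ has $|V\Gamma|\le r$. Every subgraph $\Gamma'$ of $\Gamma$ is also a subgraph of $X$ with at most $r$ vertices, so $\cut(\Gamma')\le\sep_X(r)$. We then convert a cut set into a balanced separator of the same size.

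Let $S$ be a set realising $\cut(\Gamma')$, and let $C_1,\dots,C_m$ be the components of $\Gamma'-S$, each of size at most $\frac12 n$ where $n=|V\Gamma'|$. Greedily add components to a set $P$ until $|P|\ge\frac13 n$, or until all components are used. Since each component has at most $\frac12 n$ vertices, we can stop with $|P|\le\frac23 n$: if $|P|<\frac13n$ before adding a component, then afterwards $|P|<\frac13n+\frac12n$. This bound is not sharp enough, so instead sort the components by size in decreasing order and run the standard bin argument. If the largest component has at least $\frac13 n$ vertices, take $P$ to be that component alone, giving $|P|\le\frac12 n$. Otherwise every component has fewer than $\frac13 n$ vertices, so greedy accumulation first reaches $\ge\frac13 n$ while still below $\frac23 n$.

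Set $A=P\cup S$ and $B=V\Gamma'\setminus P$. Then $A\cap B=S$, there are no edges between $A\setminus B=P$ and $B\setminus A$, and $|P|\le\frac23 n$. Also $|B\setminus A|=n-|S|-|P|\le\frac23 n$ when $|P|\ge\frac13 n$. If instead the greedy process exhausts all components with $|P|<\frac13 n$, then $|B\setminus A|=0$. So $(A,B)$ is a balanced separator of size $|S|$, and therefore $\sn(\Gamma)\le\sep_X(r)$. Combined with Dvorak--Norin this gives $\tw_X(r)\le15\sep_X(r)$.

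The main point of care is this balancing step, together with checking that the conventions of Definition \ref{defn:balancedsep} match those in \cite{Dvorak-Norin}, so that the constant $15$ transfers directly.
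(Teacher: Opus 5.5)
Your proof is correct and follows the paper's route. The lower bound is the inequality $\cut(\Gamma)\le\tw(\Gamma)+1$, which the paper cites from \cite[Lemma 2.3]{BenSchTim-12-separation-graphs} and you reprove with the standard sink-bag argument; the upper bound is \cite[Theorem 1]{Dvorak-Norin} combined with essentially the same sort-the-components conversion of a $\frac12$-cutset into a balanced separator. Delete the abandoned first greedy attempt in the balancing paragraph: its claim that unsorted greedy stops with $|P|\le\frac23 n$ is false as stated, and only your subsequent case split on whether the largest component has at least $\frac13 n$ vertices is needed.
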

\begin{proof}
    By the first part of \cite[Lemma 2.3]{BenSchTim-12-separation-graphs}, $\cut(\Gamma)-1\leq \tw(\Gamma)$ for any finite graph $\Gamma$, so taking the maximum over all subgraphs of $X$ with at most $r$ vertices yields the first inequality.
    
    For the second part, by \cite[Theorem 1]{Dvorak-Norin}, $\tw(\Gamma)\leq 15\sn(\Gamma)$ for any finite graph $\Gamma$, so for every $r$
    \[
        \tw_X(r)\leq 15\max\setcon{\sn(\Gamma)}{\Gamma\leq X,\ |V\Gamma|\leq r}
    \]
    Therefore, it suffices to show that every subgraph $\Gamma\leq X$ with at most $r$ vertices admits a balanced separator of size at most $\sep_X(r)$.
    
    For each $\Gamma\leq X$ with at most $r$ vertices, choose a $\frac12$-cutset $C$ of cardinality at most $\sep_X(r)$. Order the connected components $A_1,\ldots,A_k$ of $\Gamma\setminus C$ so that $|VA_i|\geq|VA_j|$ whenever $i\leq j$. Choose the maximum $\ell$ so that $A'=\bigcup_{i=1}^{\ell} VA_i$ satisfies $|A'| \leq \frac23|V\Gamma|$, and set $B'=\bigcup_{i=\ell+1}^k VA_i$. Note that $A'\cap B'=\emptyset$ and $\ell\geq 1$, since $|A_1|\leq\frac12|V\Gamma|$ by assumption. Also, $|A'|\geq\frac{1}{3}|V\Gamma|$ as otherwise $|A_{\ell+1}|\leq |A_\ell|\leq |A'|\leq \frac{1}{3}|V\Gamma|$ and therefore $\bigcup_{i=1}^{\ell+1} |VA_i|\leq\frac23|V\Gamma|$ contradicting the choice of $\ell$. Hence $|B'|\leq\frac23|V\Gamma|$. Define $A=A'\cup C$ and $B=B'\cup C$. It follows that $(A,B)$ is a balanced separator of $\Gamma$ of size $|C|\leq \sep_X(r)$, as required.
\end{proof}

\section{Treewidth of tree-graded graphs}

We start with the definition of a tree-graded graph, which is precisely a graph that is a tree-graded space in the sense of Dru\c{t}u--Sapir \cite{DS05}.

\begin{definition}\label{defn:tg}
    Let $X$ be a graph and let $\{X_i\}_{i\in I}$ be a family of connected subgraphs of $X$. We say $X$ is \textbf{tree-graded} with respect to $\{X_i\}_{i\in I}$ if
    \begin{itemize}
        \item $X=\bigcup_{i\in I} X_i$,
        \item for $i,j\in I$ with $i\neq j$, $X_i\cap X_j$ is either empty or a single vertex,
        \item every simple loop in $X$ is contained in some $X_i$.
    \end{itemize}
    The subgraphs $X_i$ are called pieces.
\end{definition}

Our goal is to prove Proposition \ref{prop:twtg}. We start with a lemma.

\begin{lemma}\label{lem:twjoin}
    Let $\Gamma=(V\Gamma,E\Gamma)$ be a finite graph admitting two connected subgraphs $\Gamma^1,\Gamma^2$ such that $\Gamma=\Gamma^1\cup\Gamma^2$ and $\Gamma^1\cap \Gamma^2$ is either empty or a single vertex. Then $\tw(\Gamma)=\max\{\tw(\Gamma^1),\tw(\Gamma^2)\}$.
\end{lemma}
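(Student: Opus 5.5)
The plan is to prove the two inequalities separately. For the lower bound $\tw(\Gamma)\geq\max\{\tw(\Gamma^1),\tw(\Gamma^2)\}$, I will use that treewidth is monotone under taking subgraphs. Given a tree-decomposition $(T,\{X_g\})$ of $\Gamma$, I will intersect every bag with $V\Gamma^1$ (respectively $V\Gamma^2$). Conditions 1--3 of Definition \ref{defn:tw} survive this restriction: the union still covers the vertices of the subgraph, every edge of $\Gamma^j$ is an edge of $\Gamma$ and so lies in some bag, and for a vertex $v$ of $\Gamma^j$ the set of bags containing $v$ is unchanged. The width cannot increase, so $\tw(\Gamma^j)\leq\tw(\Gamma)$.

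For the upper bound I will glue decompositions. Take optimal tree-decompositions $(T^1,\{X^1_g\})$ of $\Gamma^1$ and $(T^2,\{X^2_h\})$ of $\Gamma^2$, and build $T$ from the disjoint union $T^1\sqcup T^2$ by adding one edge $g_0h_0$. The choice of this edge depends on the intersection.
\begin{itemize}
\item If $\Gamma^1\cap\Gamma^2$ is a single vertex $v$, choose $g_0$ with $v\in X^1_{g_0}$ and $h_0$ with $v\in X^2_{h_0}$.
\item If the intersection is empty, choose $g_0,h_0$ arbitrarily.
\end{itemize}
Then $T$ is a tree. Conditions 1 and 2 hold because $\Gamma=\Gamma^1\cup\Gamma^2$ as graphs, so every edge of $\Gamma$ lies in $E\Gamma^1$ or $E\Gamma^2$. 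For condition 3, a vertex other than $v$ lies in bags of only one of $T^1$ or $T^2$, where those bags are already connected. For $v$, the two connected sets of bags are joined by the edge $g_0h_0$. The width of this decomposition is the maximum of the two widths.

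The only step needing real care is to read the hypothesis as saying that every edge of $\Gamma$ lies in $\Gamma^1$ or $\Gamma^2$. This is exactly what $\Gamma=\Gamma^1\cup\Gamma^2$ as a union of subgraphs gives. Note also that connectivity of $\Gamma^1$ and $\Gamma^2$ is not needed for this lemma. A minor point is the degenerate case where some $\Gamma^j$ is empty, where the convention for the treewidth of the empty graph makes the statement trivial.
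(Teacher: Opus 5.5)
Your proposal is correct and matches the paper's proof: you glue optimal tree-decompositions of $\Gamma^1$ and $\Gamma^2$ with a single edge between bags that contain the shared vertex, exactly as the paper does. Your restriction argument showing $\tw(\Gamma^j)\leq\tw(\Gamma)$ also supplies the reverse inequality, which the paper leaves implicit as standard subgraph monotonicity.
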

\begin{proof}
    Let $(T^1,\{X^1_g\}_{g\in VT^1})$ and $(T^2,\{X^2_g\}_{g\in VT^2})$ be tree-decompositions of $\Gamma^1$ and $\Gamma^2$ satisfying $\max\{|X^1_g|\}-1=\tw(\Gamma^1)$ and $\max\{|X^2_g|\}-1=\tw(\Gamma^2)$ respectively.

    Define $T$ to be $T^1\bigsqcup T^2$ plus a single edge $e$ from some $g^1\in VT^1$ to some $g^2\in VT^2$ where $\Gamma_1\cap\Gamma_2\subseteq X^1_{g^1}\cap X^2_{g^2}$. $T$ is clearly a tree. Now, for $g\in VT$, set $X_g=X^i_g$ when $g\in VT^i$. If $\Gamma_1\cap\Gamma_2=\emptyset$ then this is certainly a tree-decomposition, so suppose $\Gamma_1\cap\Gamma_2=\{v\}$. We need only check that the full subgraph of $T$ with vertex set $\setcon{g\in VT}{v\in X_g}$ is connected, but this is by definition the union of a tree in $T^1$ containing $g^1$, a tree in $T^2$ containing $g^2$ and the edge $g^1g^2$ connecting them.
\end{proof}
 
\begin{proposition} Let $X$ be a graph and let $\{X_i\}_{i\in I}$ be a family of subgraphs of $X$. If $X$ is tree-graded with respect to $\{X_i\}_{i\in I}$, then
\[
    \tw_X(r)=\max_i \{\tw_{X_i}(r)\}
\]
\end{proposition}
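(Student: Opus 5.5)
The inequality $\tw_X(r)\geq \max_i\tw_{X_i}(r)$ is immediate: any subgraph $\Gamma\leq X_i$ with at most $r$ vertices is also a subgraph of $X$. So the work is in the reverse inequality. Fix a finite subgraph $\Gamma\leq X$ with $|V\Gamma|\leq r$. I will show that $\tw(\Gamma)\leq \max_i \tw(\Gamma\cap X_i)$. Since each $\Gamma\cap X_i$ is a subgraph of $X_i$ with at most $r$ vertices, this gives $\tw(\Gamma)\leq\max_i\tw_{X_i}(r)$.

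First reduce to connected $\Gamma$. The treewidth of a disjoint union is the maximum over its components, which is the empty-intersection case of Lemma \ref{lem:twjoin}. The pieces $\Gamma\cap X_i$ of a component are subgraphs of the corresponding $\Gamma\cap X_i$, and treewidth is monotone under subgraphs. So it is enough to treat connected $\Gamma$.

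Only finitely many $i$ have $\Gamma\cap X_i$ containing an edge. Write $\Gamma_i$ for the subgraph consisting of the edges of $\Gamma$ lying in $X_i$ together with their endpoints, and add isolated vertices as separate trivial pieces. Then $\Gamma$ is the union of finitely many subgraphs $\Gamma_1,\dots,\Gamma_n$, each contained in some $X_i$. Two of them meet in at most a single vertex, by the second axiom, since they lie in distinct pieces.

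Next, replace each $\Gamma_j$ by its connected components, so that Lemma \ref{lem:twjoin} can be applied; it requires connected parts. Form the bipartite incidence graph $B$ whose vertices are these components together with the vertices of $\Gamma$ lying in at least two components, with an edge between a component and each such vertex it contains.

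\textbf{Key step (main obstacle).} I claim $B$ is a tree, i.e.\ the components are glued in a tree-like pattern. Suppose $B$ contained a cycle. Joining consecutive cut vertices by paths inside the components would produce a simple loop in $\Gamma$ passing through at least two distinct components. Components of the same $\Gamma_j$ are disjoint, so consecutive components in the cycle lie in different pieces. Such a loop is then not contained in a single piece $X_i$: it uses edges from two different pieces, and an edge lies in two pieces only if both its endpoints lie in their intersection, which has at most one vertex. This contradicts the third axiom. The care needed here is in checking that the loop constructed is genuinely simple. If the cycle in $B$ is chosen shortest, the paths can be taken inside distinct components that share only the designated cut vertices, so simplicity holds.

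Given that $B$ is a finite tree, remove a leaf component $C$. What remains, $\Gamma'$, is connected and meets $C$ in exactly one vertex. Lemma \ref{lem:twjoin} gives $\tw(\Gamma)=\max\{\tw(C),\tw(\Gamma')\}$. By induction on the number of components, $\tw(\Gamma)$ is the maximum of the treewidths of the components. Each component lies in some $\Gamma\cap X_i$, so its treewidth is at most $\tw(\Gamma\cap X_i)$ by monotonicity. This gives $\tw(\Gamma)\leq \max_i \tw(\Gamma\cap X_i)$, and taking the maximum over all $\Gamma$ with $|V\Gamma|\leq r$ completes the argument.
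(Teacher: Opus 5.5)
Your proof is correct. It has the same skeleton as the paper's: split a connected $\Gamma$ into parts that each lie in a single piece, show these parts are glued along single vertices in a tree-like pattern, apply Lemma \ref{lem:twjoin} repeatedly, and finish with monotonicity. The bookkeeping differs in two places.

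\textbf{How the parts are chosen.} The paper first proves that each $\Gamma\cap X_i$ is connected. It takes a shortest path inside $X_i$, which may leave $\Gamma$, and closes it up with a path in $\Gamma$; this shows $\Gamma$ is tree-graded with respect to $\{\Gamma\cap X_i\}$. You never prove this connectivity; you pass to connected components of the per-piece edge sets instead.

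\textbf{How the gluing is shown to be tree-like.} The paper orders the pieces so that each meets the union of its predecessors, and shows that each such meeting is a single vertex. You encode the gluing in the incidence graph $B$, get acyclicity from a shortest cycle in $B$, and then peel off leaves.

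Your version only builds loops inside $\Gamma$, so it never uses that the pieces $X_i$ are connected. The paper's version gives the cleaner by-product that a tree-grading of $X$ restricts to a tree-grading of every connected subgraph.

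Three details you should write out:
\begin{enumerate}
\item You only argue that $B$ has no cycles, but the leaf-peeling also needs $B$ to be connected. This holds because $\Gamma$ is connected, and a path in $\Gamma$ passes from one component to another only through a vertex lying in both.
\item Leaves of $B$ are components, since cut-vertex nodes have degree at least $2$ by definition.
\item The simplicity claim needs its one line of justification. Two distinct components share at most one vertex, so a shortest cycle in $B$ contains at least three components. Choose each connecting path simple. If the paths in two non-consecutive components shared a vertex $u$, then $u$ would be a node of $B$ adjacent to both components. That gives either a chord or a shortcut, hence a shorter cycle in $B$.
\end{enumerate}
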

\begin{proof}
    As each $X_i$ is a subgraph of $X$, the first inequality is automatic. Now fix $r\in \N$, let $\Gamma\leq X$ with $|V\Gamma|\leq r$ and define $\Gamma_i=\Gamma\cap X_i$. We will prove that $\tw(\Gamma)\leq\max\setcon{\tw(\Gamma_i)}{i\in I}$. 
    We may assume that $\Gamma$ is connected, as the treewidth of a graph equals the maximum of the treewidths of its connected components. We may also assume that $|V\Gamma|\geq 2$, as the result clearly holds for a single vertex.

    Set $I_\Gamma= \setcon{i\in I}{|V\Gamma_i|\geq 2}$. Note that $I_\Gamma$ is finite. We will carefully verify that $\Gamma$ is tree-graded with respect to $\{\Gamma_i\}_{i\in I_\Gamma}$. 

    Suppose for a contradiction that some $\Gamma_i$ is not connected. Let $P$ be a minimal length path in $X_i$ connecting two vertices $v,w$ from distinct connected components $\Gamma^0_i$ and $\Gamma^1_i$ of $\Gamma_i$. By minimality $P\cap \Gamma=\{v,w\}$. Let $Q$ be any path from $w$ to $v$ in $\Gamma$. The concatenation $PQ$ is a simple cycle. It is not contained in $X_i$, as there is no path $Q$ connecting $w$ to $v$ in $\Gamma_i=\Gamma\cap X_i$ and it is not contained in any other $X_j$ as $P$ contains an edge from $X_i$ and distinct pieces intersect in at most a single vertex, contradicting the definition of a tree-graded graph.

    Next, we show that $\Gamma=\bigcup_{i\in I_\Gamma} \Gamma_i$. As $X=\bigcup_{i\in I} X_i$, $\Gamma=\bigcup_{i\in I} \Gamma_i$, it suffices to show that whenever $|V\Gamma_i|=1$, there is some $\Gamma_j$ containing $\Gamma_i$ with $|V\Gamma_j|\geq 2$. Suppose $\Gamma_i$ is a single vertex $v$. As $\Gamma$ is connected, and we assumed $|V\Gamma|\geq 2$, $v$ has a neighbour $w$ in $\Gamma$. The edge $vw$ is contained in some $X_j$ and is therefore in $\Gamma_j$. It follows that $\{v\}=\Gamma_i\subset \Gamma_j$ and $|V\Gamma_j|\geq 2$ as required.

    The final two properties are very simple. For any $i,j\in I$, $\Gamma_i\cap\Gamma_j\subseteq X_i\cap X_j$ which is either empty or a single vertex. Any simple loop in $\Gamma$ is contained in some $X_i$, so is also contained in $\Gamma_i$.

    Order $I_\Gamma=\{i_0,\ldots,i_k\}$ as follows. Choose $i_0$. For each $m\geq 1$ we choose $i_m$ so that the subgraphs $\bigcup_{j=0}^{m-1} \Gamma_{i_j}$ and $\Gamma_{i_m}$ intersect. This is always possible as $\Gamma=\bigcup_{i\in I_\Gamma} \Gamma_i$ is connected. We claim that this intersection must be a single vertex. Suppose not, and choose distinct $v,w\in \bigcup_{j=0}^{m-1} \Gamma_{i_j}\cap\Gamma_{i_m}$ at minimal distance in $\Gamma_{i_m}$. Let $P$ be a path from $v$ to $w$ in $\Gamma_{i_m}$ of minimal length. It follows from minimality that $P\cap \bigcup_{j=0}^{m-1} \Gamma_{i_j}=\{v,w\}$. let $Q$ be a path from $w$ to $v$ in $\bigcup_{j=0}^{m-1} \Gamma_{i_j}$ of minimal length. The concatenation $PQ$ is a simple cycle that is not contained in a single $\Gamma_i$, contradicting the definition of a tree-grading.
    
    Now we iteratively apply Lemma \ref{lem:twjoin} to $\Gamma_1=\bigcup_{j=0}^{m-1} \Gamma^{i_j}$ and $\Gamma_2=\Gamma^{i_{m}}$ for each $1\leq m\leq k$.

    It follows that
    \[
        \tw(\Gamma) \leq \max_{i\in I_\Gamma} \{\tw(\Gamma_i)\} \leq \max_{i\in I} \{\tw_{X_i}(r)\}
    \]
    As this holds for all subgraphs $\Gamma\leq X$ with $|V\Gamma|\leq r$ we deduce that
    \[
        \tw_X(r) \leq \max_{i\in I} \{\tw_{X_i}(r)\}  \qedhere
    \]
\end{proof}

\def\cprime{$'$}

\end{document}